\newcommand\sizefigure{0.45} \newcommand\szfigrhos{0.45} \renewcommand{\div}{\operatorname{div}}
\newcommand{\Rr}{{\mathbb{R}}}
\numberwithin{theorem}{section}
\newtheorem{proposition}{Proposition}
\newtheorem{lemma}{Lemma}
\newtheorem*{remark}{Remark}
\author{Diogo A. Gomes and Roberto M. Velho        \thanks{This work was partially supported by KAUST baseline and start-up
                funds and KAUST SRI, Uncertainty Quantification Center in Computational Science
                and Engineering.}        \thanks{King Abdullah
                University of Science and Technology (KAUST), CEMSE Division and  
                KAUST SRI, Uncertainty Quantification Center in Computational Science
                and Engineering, Thuwal 23955-6900, Saudi Arabia.                 {\tt\small diogo.gomes@kaust.edu.sa}
                {\tt\small roberto.velho@gmail.com}
        }}
\title{\Large \bf On the Hughes Model and Numerical Aspects}
\begin{document}

\maketitle
\date{}
\begin{abstract}
Here, we study a crowd model proposed by R. Hughes in \cite{hug02} and we describe a numerical approach to solve it. The Hughes model comprises a Fokker-Planck equation coupled with an eikonal equation with Dirichlet or Neumann data.
First, we establish a priori estimates for the solutions. Second, we study  radial solutions and identify a shock
formation mechanism. Third, we illustrate the existence of congestion, the breakdown of the model, and 
the trend to the equilibrium. Finally, we propose a new numerical method and  
consider two numerical examples.
\end{abstract}

\section{Introduction}

Understanding the dynamics of pedestrian crowds is of great  significance, in particular for the prevention of catastrophic emergency
evacuations. Here, we consider an extension of the PDE\ model  proposed 
in \cite{hug02} that describes the evolution of a pedestrian crowd. 
Our  system of PDEs comprises a continuity equation or Fokker-Planck equation with viscosity~ $\varepsilon \geq  0$ and a Hamilton-Jacobi equation
\begin{equation}\label{eq_hughes_fokker-planck}
\begin{cases}
\displaystyle \rho_t(x,t) - \div(\rho(1-\rho)^2 Du)=\varepsilon  \Delta \rho,\\
\displaystyle |Du(x)|^2=\frac{1}{(1-\rho)^2},  
\end{cases}
\end{equation} 
where $\rho: \Omega \times \Rr_+ \to \Rr$,  the density of agents,  and $u: \Omega \to \Rr$,  the exit time, are the unknowns and  the given initial data is $\rho(x,0) = \rho_0(x)$ and $u(x)=u_0$, with $x$ representing the spatial variable and $t$ the time. Here, $\Omega$ is an open domain of $\Rr^d$ and we focus on the cases relevant in applications,  $\Rr$ and $\Rr^2$.
The Fokker-Planck equation describes the evolution of the crowd density $\rho$
($0 \leq \rho \leq 1$), while the Eikonal determines the optimal direction
of movement for each individual/agent if they assume that the rest of the
population is frozen. The constraint $\rho\leq 1$ corresponds to the
maximal density of the population.
The Dirichlet condition corresponds to  the areas on the boundary where agents/people/pedestrians can leave. The Neumann condition corresponds to a no-flow condition at the boundary. The correct interpretation of these boundary conditions is essential  in the design of  numerical schemes. The case $\varepsilon=0$\ is the model introduced in \cite{hug02}. 

Significant progress has been achieved in the understanding of these problems (\cite{MR2921876}, \cite{markw}, \cite{MR3229831}, \cite{MR3041567}). However, even some aspects of one-dimensional models are not completely understood.
Microscopic \cite{MR3460619, 2016arXiv160206153D} and mean-field game \cite{markw2, MR2928383} interpretations were used to study  the macroscopic dynamics that the Hughes model describes.
Numerical approaches to these problems were developed in \cite{2016arXiv160107324C, MR3055243}.

Here, we establish new a priori estimates for solutions of  \eqref{eq_hughes_fokker-planck} that give a partial regularity for the solutions. Then, we consider a radial problem to examine the behavior of the model in two and three dimensions in a simplified setting. We show that, when $\varepsilon=0,$ the model admits shocks, which  we also illustrate numerically. Next, we study a one-dimensional problem, the flow problem. Here,  agents arrive at a prescribed rate, that we call current,  on one side of an interval and leave through the other side. We show that, in certain circumstances, the Hughes model may not be well-posed as the density $\rho$ exceeds the maximal congestion threshold. In these examples, we identify two  mechanisms of loss of regularity:  shocks in the zero-viscosity problem and congestion in the flow problem.  Finally, we discuss a new numerical method for
\eqref{eq_hughes_fokker-planck} and illustrate the trend to equilibrium and two numerical examples. 

\section{Estimates}
As a first attempt to understand the existence of solutions to the Hughes model, we investigate a priori estimates; that is, estimates that are valid
for smooth enough solutions. These estimates extend some of the results in
 \cite{MR3195844} for  the periodic setting. First, for the Neumann or Dirichlet boundary conditions and $\varepsilon>0$, we prove that smooth solutions satisfy $0\leq \rho \leq 1$ for all times, if the initial condition also satisfies this condition. This should be contrasted with the flow problem considered in Section \ref{fpsec}. Next, we prove that $Du\in L^p$, for any $1<p<\infty$, and then $\frac{1}{1-\rho}\in L^p$. This last estimate gives a quantitative control on the congestion. 
Here, in contrast, we consider the Hughes model in an open domain, $\Omega\subset \Rr^d$, with Dirichlet-Neumann conditions: $\partial \Omega=\Gamma_d\cup\Gamma_n$,  $\nu$ the outer unit normal to $\partial\Omega$, $\rho=0$ in $\Gamma_d\times [0,T]$, $\rho_\nu=0$ in $\Gamma_n\times [0,T]$ and viscosity $\varepsilon=1$.\\

\begin{lemma}\label{rbs}
Let $\rho:\Omega\times [0,T]\to \Rr$ solve
\[
\rho_t - \div(\rho (1-\rho) g(x,t))=\Delta \rho.  
\]

Then, $0\leq \rho(x,t)\leq 1$, if $0\leq \rho(x,0)\leq 1$.\\
\end{lemma}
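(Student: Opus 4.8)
The plan is to prove the two inequalities $\rho\ge 0$ and $\rho\le 1$ separately, exploiting that the flux $\rho(1-\rho)g$ degenerates exactly at $\rho=0$ and at $\rho=1$. Indeed, $\rho\equiv 0$ and $\rho\equiv 1$ are themselves solutions of the equation, compatible with the Dirichlet--Neumann data ($0\le 0$ and $0\le 1$ on $\Gamma_d$, zero normal derivative on $\Gamma_n$), so the statement is morally the parabolic comparison principle; since only an a priori estimate for smooth solutions is claimed, I will give the corresponding energy argument, which is elementary.

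For the upper bound, multiply the equation by $\phi:=(\rho-1)_+$ and integrate over $\Omega$. The time term yields $\frac{d}{dt}\tfrac12\int_\Omega\phi^2$. Integrating the diffusion term by parts gives $-\int_\Omega|D\phi|^2$ plus a boundary integral that vanishes: on $\Gamma_d$ because $\rho=0$ forces $\phi=0$, and on $\Gamma_n$ because $\rho_\nu=0$. Integrating the convection term by parts likewise kills the boundary contribution (on $\Gamma_d$ since $\phi=0$ and $\rho(1-\rho)=0$ there, on $\Gamma_n$ by the no--flow condition $g\cdot\nu=0$), leaving $\int_\Omega\rho(1-\rho)\,g\cdot D\phi$. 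On $\{\rho>1\}$ one has $D\phi=D\rho$ and $\rho(1-\rho)=-(1+\phi)\phi$, so this equals $-\int_\Omega\phi\,g\cdot D\phi-\int_\Omega\phi^2\,g\cdot D\phi$; rewriting the first integral as $\tfrac12\int_\Omega(\div g)\,\phi^2$ and applying Young's inequality to the second so as to absorb a copy of $\tfrac12\int_\Omega|D\phi|^2$, I arrive at
\[
\frac{d}{dt}\int_\Omega\phi^2+\int_\Omega|D\phi|^2\le C\int_\Omega\phi^2+C\int_\Omega\phi^4 .
\]

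To close this, I would use that $\rho$ is smooth and $\rho(\cdot,0)\le 1$: set $t^\ast=\sup\{t:\rho(\cdot,s)\le 1\text{ for all }s\le t\}$; on a short interval $[t^\ast,t^\ast+\eta]$ continuity gives $0\le\phi\le\delta$ with $\delta$ as small as we wish, so $\int\phi^4\le\delta^2\int\phi^2$ and the inequality reduces to $\frac{d}{dt}\int\phi^2\le(C+C\delta^2)\int\phi^2$; since $\int\phi^2(t^\ast)=0$, Gronwall forces $\phi\equiv 0$ there, contradicting the maximality of $t^\ast$ unless $t^\ast=T$. (In two dimensions one may instead dispense with smoothness and close the estimate directly by the Ladyzhenskaya inequality $\|\phi\|_{L^4}^4\le C\|\phi\|_{L^2}^2\|D\phi\|_{L^2}^2$ together with $\phi(\cdot,0)=0$.) The lower bound $\rho\ge 0$ follows in exactly the same way by testing with $\rho_-:=\max(-\rho,0)$ and using that $\rho(1-\rho)$ also vanishes at $\rho=0$: on $\{\rho<0\}$ one writes $\rho(1-\rho)=-\rho_-(1+\rho_-)$ and repeats the computation.

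The main obstacle is the super-quadratic terms $\int\phi^3,\int\phi^4$ produced by the nonlinearity: since the convective term is only quadratic (not cubic) in $\rho-1$, a bare Gronwall argument fails and one must exploit either the smoothness of the solution on short time intervals or a Sobolev interpolation. A secondary point to verify carefully is that all boundary integrals genuinely drop — in particular that the Neumann portion of $\partial\Omega$ is a no--flow boundary for the vector field $g$ as well, so that the convective boundary term on $\Gamma_n$ vanishes.
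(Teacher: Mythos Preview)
Your argument works for smooth solutions, but the paper's route is much shorter. The paper simply sets $\tilde\rho=1-\rho$, observes that $\tilde\rho$ satisfies $\tilde\rho_t-\div(\rho\,\tilde\rho\,g)=\Delta\tilde\rho$ --- the same structural form as the equation for $\rho$ --- and then invokes the standard positivity-preservation for the \emph{linear} Fokker--Planck equation $v_t-\div(bv)=\Delta v$, freezing $b=\rho g$ (respectively $b=(1-\rho)g$), to get $\tilde\rho\ge 0$ and $\rho\ge 0$ at once. The ``main obstacle'' you identify --- the super-quadratic terms $\int\phi^3$, $\int\phi^4$ --- is therefore self-inflicted: it appears only because you expand $\rho=1+\phi$ inside the flux rather than freezing the coefficient. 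Within your own energy framework, if you test the linear equation for $\tilde\rho$ against $\tilde\rho_-=\phi$ and keep $b=\rho g$ as a given smooth vector field, the convective term collapses to $\tfrac12\int(\div b)\,\phi^2$ and a plain Gronwall closes the estimate with no continuity argument and no Sobolev interpolation. Your boundary concern is legitimate for the energy method (it does need $g\cdot\nu=0$ on $\Gamma_n$, which holds in the intended application since $g=(1-\rho)Du$ and $u_\nu=0$ there), whereas the maximum-principle viewpoint the paper adopts bypasses this via Hopf's lemma.
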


\begin{proof}
Note that $\tilde \rho=1-\rho$ satisfies
\begin{displaymath}
\tilde \rho_t-\div(\rho \tilde \rho g(x,t))=\Delta \tilde \rho.
\end{displaymath}
Because $\tilde \rho(x,0)\geq 0$ (and, with Dirichlet boundary data,   $\tilde \rho \geq 0$ in $\partial \Omega$),  
we have $\tilde \rho \geq 0$. 
\end{proof}

\begin{proposition}\label{propIdn}
        Let $(u, \rho)$ solve \eqref{eq_hughes_fokker-planck} with $\varepsilon = 1$. Suppose
        $u=0$ in $\Gamma_d\times [0,T]$, $u_\nu=0$ on  $\Gamma_n\times [0,T]$                         and
        $0< \rho< 1$ at $t=0$.
        Then, for any $\alpha<-1$
        \begin{displaymath}
        \displaystyle   \frac{d}{dt}\int_{\Omega} (1-\rho)^{\alpha+1}\leq C \int_{\Omega} (1-\rho)^{\alpha+1}.
        \end{displaymath}
        Furthermore, 
        \[
        \int_0^T \int_{\Omega}  |D(1-\rho)^{\frac{\alpha+1}{2}}|^2\leq C.
        \]
\end{proposition}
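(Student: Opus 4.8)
The plan is to change variables to $v := 1-\rho$, which satisfies $0<v<1$ on $[0,T]$ by Lemma~\ref{rbs}, solves the Fokker--Planck equation $v_t + \div\big((1-v)v^2\,Du\big) = \Delta v$ (a direct computation, cf.\ the proof of Lemma~\ref{rbs}), and for which the eikonal equation reads $|Du| = 1/v$. The boundary data translate to $v\equiv 1$ on $\Gamma_d$ and $v_\nu = 0$, $u_\nu=0$ on $\Gamma_n$. I would then differentiate $\int_\Omega v^{\alpha+1}$ in time,
\[
\frac{d}{dt}\int_\Omega v^{\alpha+1} = (\alpha+1)\int_\Omega v^\alpha v_t = (\alpha+1)\int_\Omega v^\alpha\Big(\Delta v - \div\big((1-v)v^2 Du\big)\Big),
\]
and integrate by parts in both terms.

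The diffusion term produces $-\alpha(\alpha+1)\int_\Omega v^{\alpha-1}|Dv|^2$ together with the boundary integral $(\alpha+1)\int_{\partial\Omega} v^\alpha v_\nu$; this vanishes on $\Gamma_n$ since $v_\nu=0$ there, and on $\Gamma_d$ it has the favorable sign: there $v\equiv 1$ and, because $\rho\ge 0=\rho|_{\Gamma_d}$ so $\rho$ attains a minimum on $\Gamma_d$, the outward derivative satisfies $v_\nu=-\rho_\nu\ge 0$, hence $(\alpha+1)\int_{\Gamma_d}v^\alpha v_\nu\le 0$ as $\alpha+1<0$, and it may be dropped. The transport term produces $\alpha(\alpha+1)\int_\Omega v^{\alpha+1}(1-v)\,Dv\cdot Du$ plus a boundary integral proportional to $\int_{\partial\Omega}v^\alpha(1-v)v^2 u_\nu$, which vanishes entirely: on $\Gamma_n$ because $u_\nu=0$, and on $\Gamma_d$ because the factor $1-v=\rho$ is zero there. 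Since $\alpha<-1$ gives $\alpha(\alpha+1)>0$, the surviving bulk terms are
\[
\frac{d}{dt}\int_\Omega v^{\alpha+1} \le -\alpha(\alpha+1)\int_\Omega v^{\alpha-1}|Dv|^2 + \alpha(\alpha+1)\int_\Omega v^{\alpha+1}(1-v)\,Dv\cdot Du.
\]

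Next I would estimate the transport term: since $|Du|=1/v$ and $0<1-v<1$, it is at most $\alpha(\alpha+1)\int_\Omega v^\alpha|Dv|$, and Young's inequality $v^\alpha|Dv| = \big(v^{(\alpha-1)/2}|Dv|\big)\big(v^{(\alpha+1)/2}\big)\le \tfrac12 v^{\alpha-1}|Dv|^2 + \tfrac12 v^{\alpha+1}$ absorbs half of the dissipation, yielding
\[
\frac{d}{dt}\int_\Omega v^{\alpha+1} + \frac{\alpha(\alpha+1)}{2}\int_\Omega v^{\alpha-1}|Dv|^2 \le \frac{\alpha(\alpha+1)}{2}\int_\Omega v^{\alpha+1}.
\]
Dropping the nonnegative dissipation gives the first inequality with $C=\alpha(\alpha+1)/2$; Gr\"{o}nwall then bounds $\int_\Omega v^{\alpha+1}$ uniformly on $[0,T]$ in terms of $\int_\Omega(1-\rho_0)^{\alpha+1}$. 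Finally, integrating the last displayed inequality over $[0,T]$ and using $\int_\Omega v^{\alpha+1}(T)\ge 0$ controls $\int_0^T\int_\Omega v^{\alpha-1}|Dv|^2$, and since $|Dv^{(\alpha+1)/2}|^2 = \big(\tfrac{\alpha+1}{2}\big)^2 v^{\alpha-1}|Dv|^2$ this is exactly the second claim.

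The only genuinely delicate points are the boundary terms: checking that the transport flux vanishes on $\Gamma_d$ because $\rho$ vanishes there, and that the diffusion boundary term on $\Gamma_d$ has the right sign via the first-order condition at a boundary minimum of $\rho$ (a Hopf-type observation); on $\Gamma_n$ one needs \emph{both} $\rho_\nu=0$ and $u_\nu=0$. Everything else is a routine integration by parts and a single use of Young's inequality. As usual, the estimate is a priori: it presupposes enough smoothness and strict positivity $0<\rho<1$ on $[0,T]$ (consistent with Lemma~\ref{rbs}) for the manipulations to be legitimate, and $C$ depends on $\alpha$, $T$, and $\int_\Omega(1-\rho_0)^{\alpha+1}$.
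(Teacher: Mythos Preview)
Your argument is correct and follows essentially the same route as the paper: multiply the Fokker--Planck equation by a power of $1-\rho$, integrate by parts, dispose of the boundary terms using $\rho=0$ on $\Gamma_d$ (which kills the transport flux and gives the Hopf-type sign for $\rho_\nu$) and $u_\nu=\rho_\nu=0$ on $\Gamma_n$, then use the eikonal relation and a Cauchy/Young inequality to absorb the cross term into the dissipation before invoking Gr\"onwall. The only cosmetic difference is that you pass to $v=1-\rho$ first, whereas the paper keeps everything in terms of $\rho$; the computations and the handling of each term are otherwise identical.
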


\begin{proof}
Multiply the first equation in \eqref{eq_hughes_fokker-planck} by $-(\alpha+1)(1-\rho)^\alpha$. Then,
\begin{align*}
 \frac{d}{dt}\int_{\Omega}  (1-\rho)^{\alpha+1} \leq& 
                      \ c \int_{\Omega}  (1-\rho)^{\alpha+1} \rho D\rho Du  
  -\alpha(\alpha+1) \int_{\Omega}  (1-\rho)^{\alpha-1}|D\rho|^2\\
& -(\alpha+1) \int_{\partial \Omega} (1-\rho)^{\alpha+2} \rho u_\nu  
 -(\alpha+1) \int_{\partial \Omega} (1-\rho)^{\alpha}\rho_\nu
\\ 
&{\color{blue}\text{(using Cauchy's inequality)}} \\
  &\leq -\frac{\alpha (\alpha+1)}2 \int_{\Omega}  (1-\rho)^{\alpha-1} |D\rho|^2 
  +\int_{\Omega} (1-\rho)^{\alpha+3} \rho^2 |Du|^2  \\
&-(\alpha+1) \int_{\partial \Omega} (1-\rho)^{\alpha+2} \rho u_\nu 
 -(\alpha+1) \int_{\partial \Omega} (1-\rho)^{\alpha}\rho_\nu\\ 
&\hspace{0cm} {\color{blue}\text{(using the Eikonal equation and the Lemma \ref{rbs})}} \\
    &\leq -\frac{\alpha (\alpha+1)}2 \int_{\Omega}  (1-\rho)^{\alpha-1} |D\rho|^2 
    +C \int_{\Omega}  (1-\rho)^{\alpha+1}\\
    & -(\alpha+1) \int_{\partial \Omega} (1-\rho)^{\alpha+2} \rho u_\nu 
                -(\alpha+1) \int_{\partial \Omega} (1-\rho)^{\alpha}\rho_\nu.
\end{align*}

Now, we observe that, on $\Gamma_n,$ $u_\nu=0$ and, on $\Gamma_d$, we have $\rho=0$. Hence,
\(
(1-\rho)^{\alpha+2} \rho u_\nu
= 0
\)
in $\partial \Omega$. Similarly,
\(
(1-\rho)^{\alpha}\rho_\nu\leq 0
\)
in $\Gamma_d$ and vanishes in $\Gamma_n$. Hence, is also non-positive in $\partial \Omega$.
Thus, taking into account that $\alpha+1\leq 0$,
integrating in time, and using Gronwall's inequality, we get the desired estimates. 
\end{proof}

\begin{proposition}
        \label{prointdudn}
        Under the same hypothesis of Proposition~\ref{propIdn}, we have that, for any $1~<~p~<~\infty$,  

        \[
        \displaystyle   \sup_{0\leq t \leq T}  \int_{\Omega} |Du|^{2p}<C_p.
        \]
\end{proposition}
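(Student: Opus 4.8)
The plan is to combine the eikonal equation with the congestion estimate from Proposition~\ref{propIdn}. The eikonal equation gives $|Du|^2 = (1-\rho)^{-2}$ pointwise, so $\int_\Omega |Du|^{2p} = \int_\Omega (1-\rho)^{-2p}$. Thus it suffices to show $\sup_{0\le t\le T}\int_\Omega (1-\rho)^{-2p} < C_p$ for every $p \in (1,\infty)$; equivalently, setting $\alpha+1 = -2p$, i.e. $\alpha = -2p-1 < -1$, we must upgrade the conclusion of Proposition~\ref{propIdn} from a bound on $\frac{d}{dt}\int_\Omega (1-\rho)^{\alpha+1}$ (which, via Gronwall, already controls $\sup_t \int_\Omega (1-\rho)^{\alpha+1}$ in terms of its value at $t=0$) to the claimed uniform-in-time bound. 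In fact, reading Proposition~\ref{propIdn} carefully, the Gronwall argument there already yields $\int_\Omega (1-\rho(\cdot,t))^{\alpha+1} \le e^{Ct}\int_\Omega (1-\rho_0)^{\alpha+1}$, so the only remaining point is that the initial integral is finite, which holds because $0 < \rho_0 < 1$ with $\rho_0$ (being part of smooth data on a bounded domain, and vanishing on $\Gamma_d$) bounded away from $1$.

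First I would state the reduction: fix $p \in (1,\infty)$, apply the eikonal equation to write $|Du|^{2p} = (1-\rho)^{-2p}$, and observe that $-2p = \alpha+1$ for $\alpha = -2p - 1$, which indeed satisfies $\alpha < -1$ as required by Proposition~\ref{propIdn}. Second, I would invoke Proposition~\ref{propIdn} with this $\alpha$: the differential inequality $\frac{d}{dt}\int_\Omega (1-\rho)^{\alpha+1} \le C\int_\Omega (1-\rho)^{\alpha+1}$ together with Gronwall's lemma gives $\int_\Omega (1-\rho(\cdot,t))^{\alpha+1} \le e^{CT}\int_\Omega (1-\rho_0)^{\alpha+1}$ for all $t \in [0,T]$. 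Third, I would note that since $0 < \rho_0 < 1$ on $\overline\Omega$ with $\rho_0$ continuous on the compact set $\overline\Omega$ (and $\rho_0 = 0$ on $\Gamma_d$), we have $\sup_{\overline\Omega}\rho_0 =: 1 - \delta < 1$, hence $\int_\Omega (1-\rho_0)^{\alpha+1} \le \delta^{\alpha+1}|\Omega| < \infty$. Combining, $\sup_{0\le t\le T}\int_\Omega |Du|^{2p} = \sup_{0\le t\le T}\int_\Omega (1-\rho)^{-2p} \le e^{CT}\delta^{-2p}|\Omega| =: C_p$.

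The only subtlety — and the step I expect to require the most care — is the dependence of the constant $C$ in Proposition~\ref{propIdn} on $\alpha$ (hence on $p$): tracing through that proof, $C$ came from bounding $\int_\Omega (1-\rho)^{\alpha+3}\rho^2|Du|^2 = \int_\Omega (1-\rho)^{\alpha+1}\rho^2$ after using the eikonal equation, which is harmless, and from absorbing the gradient term using that $-\alpha(\alpha+1)/2 > 0$; so $C$ can be taken of order $|\alpha| = 2p+1$, and the final bound $C_p = e^{(2p+1)C_0 T}\delta^{-2p}|\Omega|$ is finite for each fixed $p$, which is all that is claimed. One should also confirm that the boundary terms vanish or have a favorable sign exactly as in Proposition~\ref{propIdn} — this is immediate since the boundary conditions on $u$ and $\rho$ are unchanged — and that $\rho < 1$ strictly for $t > 0$ (so that the integrand is finite along the evolution), which follows from Lemma~\ref{rbs} applied to strictly-less-than-one data together with the strong maximum principle. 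With these remarks the proof is complete.
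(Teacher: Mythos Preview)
Your proposal is correct and follows exactly the same route as the paper: use the eikonal equation to rewrite $|Du|^{2p}=(1-\rho)^{-2p}$ and then invoke Proposition~\ref{propIdn} with $\alpha=-2p-1<-1$ (together with Gronwall) to bound $\sup_t\int_\Omega(1-\rho)^{\alpha+1}$. The paper's proof is a one-line version of this; you have simply spelled out the choice of $\alpha$, the Gronwall step, and the finiteness of the initial integral.
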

\begin{proof}
We use the first conclusion of Proposition~\ref{propIdn} in the Eikonal equation and observe that
\begin{displaymath}
|Du|^2 = \frac{1}{(1-\rho)^2} \in L^p, \ \forall p.
\end{displaymath}
\end{proof}

\section{Shocks in radial solutions}
To understand the behavior of the Hughes model, we consider radial solutions. Thus, equation \eqref{eq_hughes_fokker-planck} becomes a scalar PDE and, thanks to this simplification, we identify the formation of shocks in the zero viscosity problem. We expect shocks to exist in general two and three-dimensional problems. 

Now, we assume radial symmetry  corresponding to a model where agents want to get away from the origin. In dimension $d > 1$, assume
 $u=u(r,t)$, where $r$ is the radius. The eikonal equation in \eqref{eq_hughes_fokker-planck} gives that
\begin{equation*}
u_r = \pm \ \frac{1}{1-\rho}.
\end{equation*}
We select the negative root because it corresponds to agents leaving the origin. Because $\rho$ is radial,  $\rho = \rho(r,t)$, we rewrite the Fokker-Planck equation in \eqref{eq_hughes_fokker-planck} in polar coordinates.  Using the preceding equation, we get
\begin{equation}\label{eq_radial_rho}
\rho_t + \frac{d-1}{r} \rho (1-\rho) + \rho_r (1- 2 \rho) = \varepsilon \left[\rho_{rr} + \frac{d-1}{r} \rho_r \right].
\end{equation} 
When $\varepsilon=0$,  the previous equation becomes the first-order partial differential equation
\begin{equation}\label{eq_radial_rho_epsilon_zero}
\rho_t + \frac{d-1}{r} \rho (1-\rho) + \rho_r (1- 2 \rho) = 0.
\end{equation}
As it is usual for first-order nonlinear partial differential equations, shocks can arise. We study the shocks using the method of characteristics.

First, we solve the characteristic system\begin{equation}\label{eq_characteristics_r_neq_inf}
\begin{cases}
\displaystyle \frac{d r}{d t} = 1 - 2 \rho,\vspace{0.1cm}\\
\displaystyle \frac{d \rho}{d t} = - \frac{d-1}{r} \rho (1-\rho),\\
\end{cases}
\end{equation}
with $r_0$, $\rho_0(r_0)$ as initial conditions in time zero for $r(t)$ and $\rho(t)$, respectively.

To solve this system of ODEs, we first define $V$ as 
\begin{equation} \label{eq_definition_V}
V(\rho) = \rho (1-\rho),
\end{equation}
and, analogously, 
$V_0(\rho_0) = \rho_0 (1 - \rho_0)$.  Next,  we rewrite \eqref{eq_characteristics_r_neq_inf} as
\begin{equation}\label{eq_charac_2nd_form}
\begin{cases}
\displaystyle \frac{d r}{d t} = \frac{\partial V}{\partial \rho},\\
\displaystyle \frac{d \rho}{d t} = - \frac{d-1}{r} V.\\
\end{cases}
\end{equation}
Then, we compute $$ \frac{\partial V}{\partial r} = - \frac{d-1}{r} \ V,$$ and conclude that\begin{equation}\label{eq_sol_V}
V = \left( \frac{r_0}{r} \right)^{d-1}  V_0.
\end{equation}

Now, solving for $\rho$ in \eqref{eq_definition_V} gives two roots. If $0<\rho_0(r_0)\leq1/2$, here called {\em regime 1},
\begin{equation*}
\rho_1 = \frac{1 -  \sqrt{1 - 4V }}{2},
\end{equation*}
while, if $1/2<\rho_0(r_0) < 1$, here called {\em regime 2},
\begin{equation*}
\rho_2 = \frac{1 +  \sqrt{1 - 4V }}{2}.
\end{equation*}
Using these expressions on the R.H.S. of the ODE for $r(t)$ in \eqref{eq_characteristics_r_neq_inf} and using \eqref{eq_sol_V}, we get

\begin{equation}\label{eq_r1dot_d_dimensional}
\dot{r_1}(t) = 1 - 2\rho_1 =
 \sqrt{1 - 4 V_0 \left[\frac{r_0}{r_1(t)}\right]^{d-1}},
\end{equation}
and
\begin{equation}\label{eq_r2dot_d_dimensional}
\dot{r_2}(t) = 1 - 2\rho_2 =
 - \sqrt{1 - 4 V_0 \left[\frac{r_0}{r_2(t)}\right]^{d-1}}.
\end{equation}

Now, using \eqref{eq_charac_2nd_form} and \eqref{eq_sol_V}, we obtain the ODE describing the time evolution of $\rho$:
\begin{equation}\label{eq_ODE_rho_d_dimensional}
\dot{\rho}(t) = - \frac{d-1}{r(t)} \left[ \frac{r_0}{r(t)} \right]^{d-1} V_0. 
\end{equation}

Along the next two subsections, we present the particular cases of the radial solutions in dimension $2$ and $3$. We also make use of the following remark.

\begin{remark}
        The function defined by $x \mapsto \sqrt{1 - 4 x (1-x)}$, is identical to $ -2x +1 $ in the interval $[0,1/2]$ and identical to $ 2x -1$ in the interval $[1/2,1]$.
\end{remark}

\subsection{Dimension 2}
For the Hughes model in dimension $2$ we must solve the following equations:
\begin{equation*}\label{eq_r1dot_d_dimensional}
\dot{r_1}(t) = \sqrt{1 - 4 V_0 \frac{r_0}{r_1(t)}},\ \text{with } 0 < \rho_0(r_0) \leq 1/2,
\end{equation*}
and
\begin{equation*}\label{eq_r2dot_d_dimensional}
\dot{r_2}(t) = - \sqrt{1 - 4 V_0 \frac{r_0}{r_2(t)}},\ \text{with } 1/2 < \rho_0(r_0) < 1,
\end{equation*}
with the initial conditions $r_1(0)=r_2(0)=r_0$.

We could only obtain a solution to $r_1(t)$ and $r_2(t)$ in implicit forms.

{\bf Regime 1:}
$r_1(t)$ is expressed as:
\begin{align*}
2 r_0 \rho_0 (1-\rho_0) \log \left[ 2 r_1(t) \left( 1 + \sqrt{1 - \frac{4 r_0 \rho_0 (1-\rho_0)}{r_1(t)}}   \right)  - 4 r_0 \rho_0 (1-\rho_0) \right] +\\
+ r_1(t) \sqrt{1 - \frac{4 r_0 \rho_0 (1-\rho_0)}{r_1(t)}} =
 t + r_0 \left\{ 2 \rho_0 (1-\rho_0) Log \left[ 4 r_0 (1-\rho_0)^2  \right] - 2 \rho_0 + 1 \right\}.
\end{align*}

{\bf Regime 2:}
$r_2(t)$ is expressed as:

\begin{align*}
2 r_0 \rho_0 (1-\rho_0) Log \left[ 2 r_2(t) \left( 1 + \sqrt{1 - \frac{4 r_0 \rho_0 (1-\rho_0)}{r_2(t)}}   \right)  - 4 r_0 \rho_0 (1-\rho_0) \right] +\\
+ r_2(t) \sqrt{1 - \frac{4 r_0 \rho_0 (1-\rho_0)}{r_2(t)}} =
-t + r_0 \left\{ 2 \rho_0 (1-\rho_0) Log \left[ 4 r_0 \rho^2_0 \right] + 2 \rho_0 - 1 \right\},
\end{align*}

Now, in dimension $2$, equation \eqref{eq_ODE_rho_d_dimensional} for the time evolution of the density $\rho$  becomes

\begin{displaymath}
\dot{\rho}(t) = - \frac{r_0}{{r(t)}^2}V_0. 
\end{displaymath}

We solve it and present the parametric plot of the radius $r(t)$ versus the density $\rho(t)$ in subsection \ref*{sct_numerical_plot_radial_solutions}. 

\subsection{Dimension 3}
For the Hughes model in dimension $3,$ we must solve the following equations:

\begin{equation*} \dot{r_1}(t) = \sqrt{1 - \frac{4 V_0 {r_0}^2}{{r_1(t)}^2}},\ \text{with } 0 < \rho_0(r_0) \leq 1/2,
\end{equation*}
and
\begin{equation*} \dot{r_2}(t) = - \sqrt{1 - \frac{4 V_0 {r_0}^2}{{r_2(t)}^2}},\ \text{with } 1/2 < \rho_0(r_0) < 1,
\end{equation*}
with the initial conditions $r_1(0)=r_2(0)=r_0$.

We obtain the explicit formulas
\begin{align*}
r_1(t) = \sqrt{4 {r_0}^2 V_0 + (t+C_1)^2} 
= \sqrt{4 {r_0}^2 V_0 + t^2 + 2 t C_1 + C_1^2},
\end{align*}
and
\begin{align*}
r_2(t)  =\sqrt{4 {r_0}^2 V_0 + (t-C_2)^2} 
        =\sqrt{4 {r_0}^2 V_0 + t^2 - 2 t C_2 + C_2^2} ,
\end{align*}
where $C_1$ and $C_2$ are chosen so that $r(0)=r_0$, implying $C_1=C_2=\pm r_0\sqrt{1 - 4 V_0}$.\\

The remark implies that $C_1=r_0 (-2 \rho_0 +1)$ and $C_2=r_0 (2 \rho_0 -1)$. Plugging them back in the expressions for $r_1(t),\ r_2(t)$ we obtain:
\begin{equation*}
r_1(t) = \sqrt{t^2 + 2 r_0 (1-2\rho_0) t + {r_0}^2}, \end{equation*}
and
\begin{equation*}
r_2(t) = \sqrt{t^2 - 2 r_0 (2 \rho_0 -1) t + {r_0}^2}. \end{equation*}
Thus, the expressions for 
  $r_1(t)$ and  $ r_2(t)$ agree and there is no need to consider two separate regimes. Thus, we get\begin{equation}\label{sol_r}
r(t) = \sqrt{t^2 + 2 r_0 (1-2\rho_0) t + {r_0}^2 }.
\end{equation}

Now, the ODE for  the density $\rho(t)$ \eqref{eq_ODE_rho_d_dimensional} assumes the form
\begin{displaymath}
\dot{\rho}(t) = - 2 \frac{{r_0}^2}{{r(t)}^3}V_0, 
\end{displaymath}
that solved with the initial condition $\rho(0)=\rho_0$ has the solution

\begin{align}
\rho(t)  = \frac{1}{2} \left[ 1 - \frac{t + r_0 (1-2\rho_0)}{r(t)} \right] 
        = \frac{1}{2} \left[ 1 - \frac{t + r_0 (1-2\rho_0)}{\sqrt{t^2 + 2 r_0 (1-2\rho_0) t + {r_0}^2}} \right]. 
\end{align}

\subsection{Numerical experiments}\label{sct_numerical_plot_radial_solutions}

Now, we  numerically investigate the  formation of shocks for the radial Hughes model without viscosity. We construct three profiles for the density $\rho_0$, all of them with support (corresponding to the values of $r_0$) in $[0,1]$.

Next, we plot the graph corresponding to $r(t)$ versus $\rho(t)$ for each profile. The different colors correspond to the solution at the different times. The blue curve in each plot corresponds to the initial density profile $\rho_0$, see {\color{red}Figure}~\ref{fig:shocks_different_initial_profiles_radial_equation_2D} for the two-dimensional case and  {\color{red}Figure}~\ref{fig:shocks_different_initial_profiles_radial_equation} for the three-dimensional one. 

We observe the formation of shocks in the solutions via the graph of $r(t)$ versus $\rho(t)$. After a particular time, it becomes a non-single-valued function. In fact, once a shock happens, the characteristic's method is not valid and, consequently, the above expressions for $r(t)$ and $\rho(t)$ lack meaning.

\begin{figure}
\centering
\begin{subfigure}[b]{\szfigrhos\textwidth}
\includegraphics[width=\textwidth]{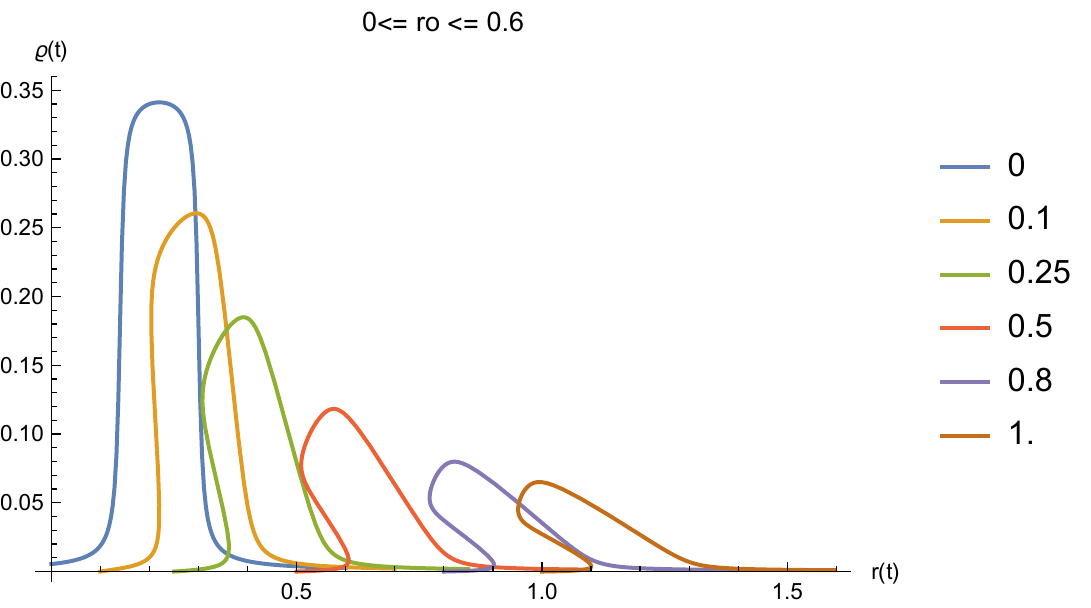}
\caption{\color{blue}Case 1: $\rho_0$ concentrated around $0.2$ with support on $0~\leq~ r_0~\leq~0.6$ and maximum value of $0.35$.}
\label{fig:shocks_case_1}
\end{subfigure}\vspace{0.25cm}
~ \begin{subfigure}[b]{\szfigrhos\textwidth}
\includegraphics[width=\textwidth]{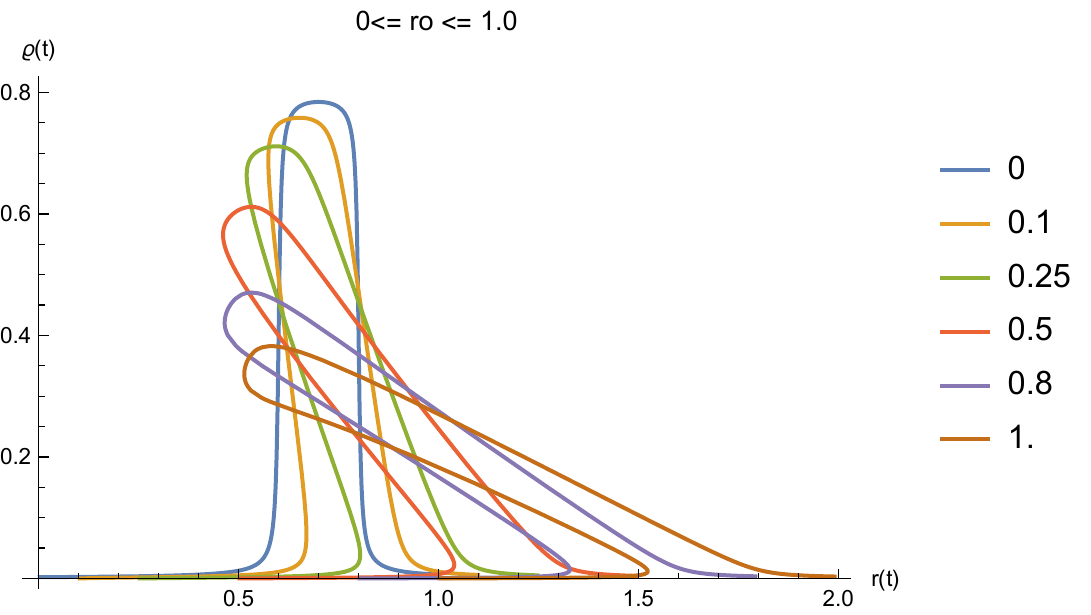}
\caption{\color{blue}Case 2: $\rho_0$ concentrated around $0.75$ with support on $0.5~\leq~ r_0~\leq~1.0$ and maximum value of $0.8$.}
\label{fig:shocks_case_2}
\end{subfigure}\vspace{0.25cm}
~
\begin{subfigure}[b]{\szfigrhos\textwidth}
\includegraphics[width=\textwidth]{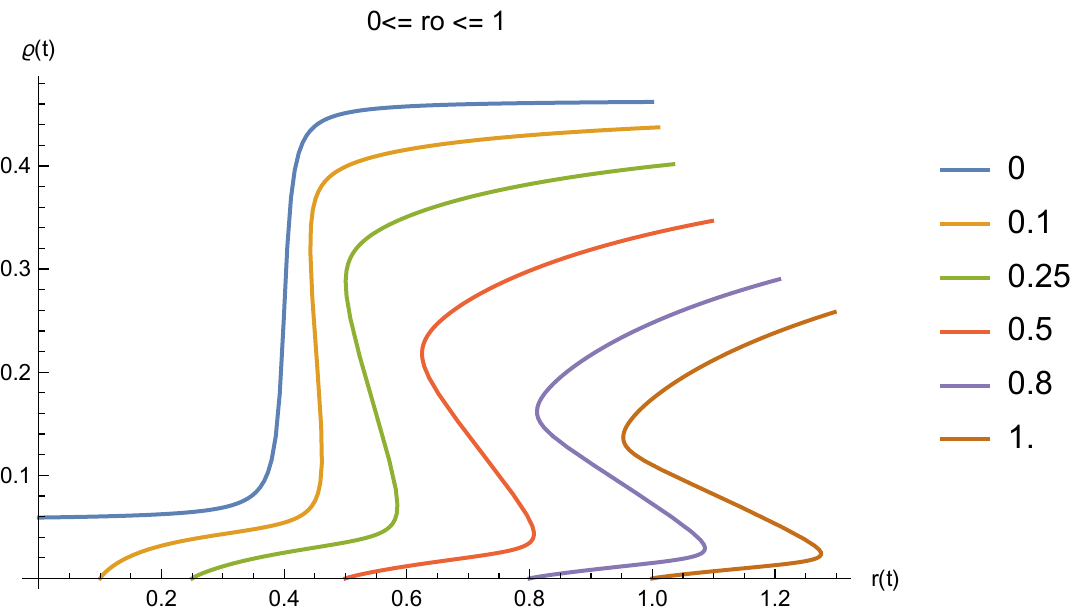}
\caption{\color{blue}Case 3: $\rho_0$ with support on $0~\leq~ r_0~\leq~1.0$ and maximum value of $0.4$.}
\label{fig:shocks_case_3}
\end{subfigure}
\caption{{\color{blue}Shocks along the time evolution of three given profiles in 2-D.}}
\label{fig:shocks_different_initial_profiles_radial_equation_2D}
\end{figure}

            \begin{figure}
            \centering
            \begin{subfigure}[b]{\szfigrhos\textwidth}
            \includegraphics[width=\textwidth]{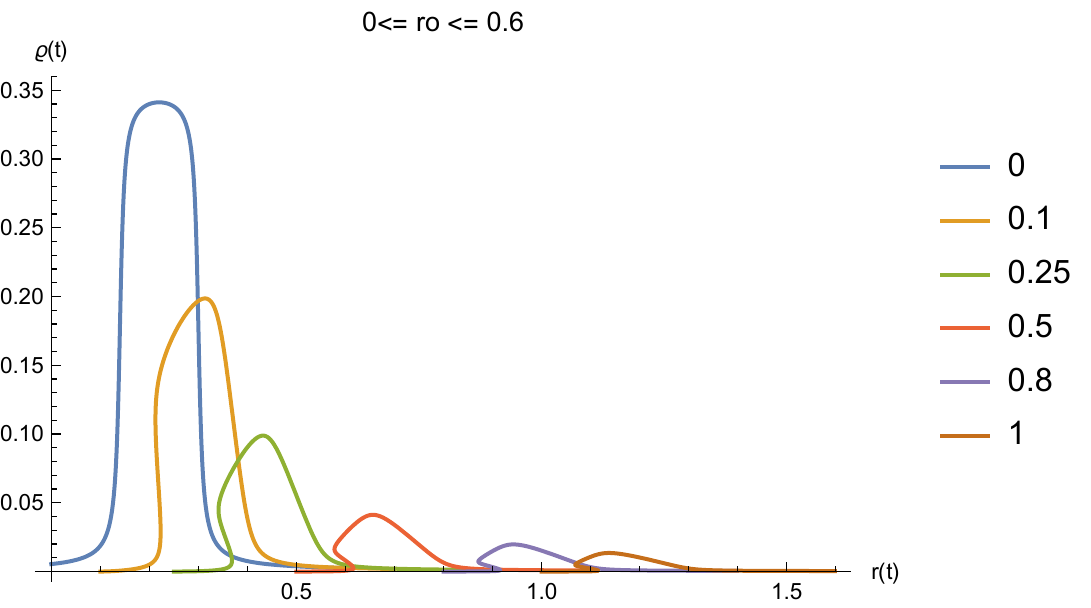}
            \caption{\color{blue}Case 1: $\rho_0$ concentrated around $0.2$ with support on $0~\leq~ r_0~\leq~0.6$ and maximum value of $0.35$.}
            \label{fig:shocks_case_1}
            \end{subfigure}\vspace{0.25cm}
            ~                         \begin{subfigure}[b]{\szfigrhos\textwidth}
            \includegraphics[width=\textwidth]{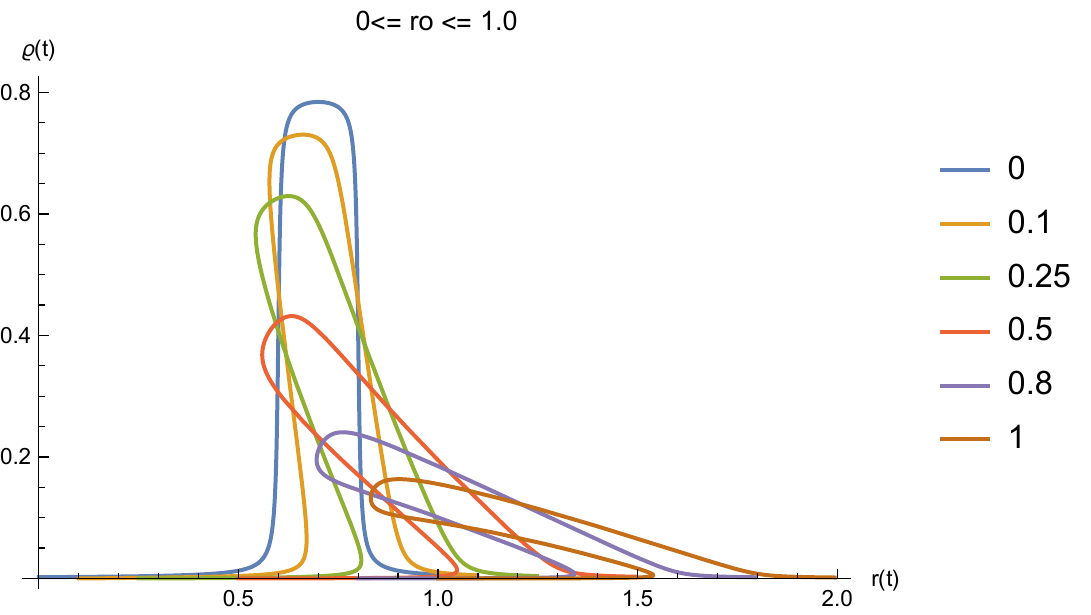}
            \caption{\color{blue}Case 2: $\rho_0$ concentrated around $0.75$ with support on $0.5~\leq~ r_0~\leq~1.0$ and maximum value of $0.8$.}
            \label{fig:shocks_case_2}
            \end{subfigure}\vspace{0.25cm}
            ~
            \begin{subfigure}[b]{\szfigrhos\textwidth}
            \includegraphics[width=\textwidth]{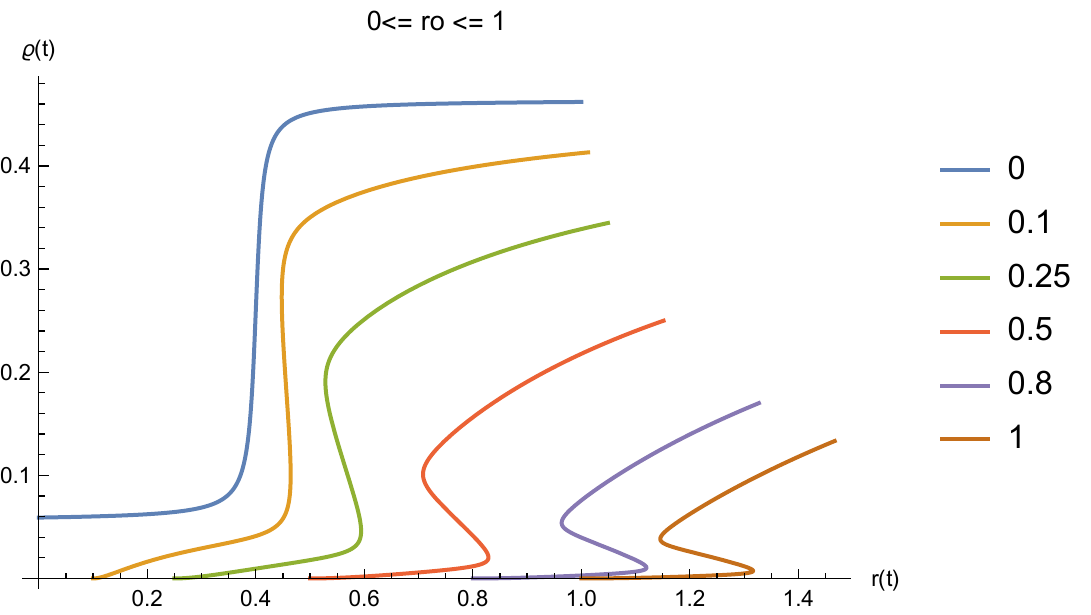}
            \caption{\color{blue}Case 3: $\rho_0$ with support on $0~\leq~ r_0~\leq~1.0$ and maximum value of $0.4$.}
            \label{fig:shocks_case_3}
            \end{subfigure}
            \caption{{\color{blue}Shocks along the time evolution of three given profiles in 3-D.}}
            \label{fig:shocks_different_initial_profiles_radial_equation}
            \end{figure}

Case $1$ and $2$ have initial profiles with supports of similar size but case $2$ presents shocks in a shorter time (both in 2-D and 3-D cases). This shows the dependence on the intensity of $\rho_0$; in case $2$, $\rho_0$ reaches $0.8$, while in case $1$, it is not greater than $0.35$.     

Case $1$ and $2$ exemplify the dependence on the value of $\rho_0$ for the time the shock appears. Both cases have initial profiles with supports of similar size but case $2$ presents shocks in a shorter time (both in 2-D and 3-D cases). In case 2, $\rho_0$ reaches $0.8$, while in case 1, it is not greater than $0.35$.

Finally, the presence of shocks in our examples gives the existence of shocks for the Hughes model in any dimension. We are not aware a proof of this feature in the literature.

\section{Flow problem - stationary case}\label{fpsec}

The flow problem is a natural problem in dimension one. It consists of people entering a domain from one side at a fixed rate and exiting through the other side. If the flow is large enough, the maximal density $\rho=1$ may be achieved as we show in what follows.  This situation illustrates the breakdown of the model.

Consider the one-dimensional flow problem in $[0,1]$ where agents arrive at $x=0$ and are only allowed to leave through $x=1$.
By computing $u_x$ in the Eikonal equation \eqref{eq_hughes_fokker-planck} and substituting in the Fokker-Planck equation \eqref{eq_hughes_fokker-planck}, we obtain
\begin{equation}\label{eq_pde_rho_1-dimensional}
\rho_t + \rho_x (1 - 2 \rho) = \varepsilon \ \rho_{xx}.
\end{equation}
The corresponding 
stationary equation is
\begin{equation*}
\varepsilon \rho_{xx} + 2\rho \rho_x - \rho_x = \frac{d}{dx} \left[\varepsilon \rho_x + \rho^2 - \rho \right] = 0.
\end{equation*}
We can then formulate the stationary flow problem as:
\begin{equation}\label{eq_ODE_stationary}
\begin{cases}
\varepsilon \rho_x + \rho^2 - \rho = j, \ \ \ x \in [0,1],\\
\rho(1) = 0,\\
\end{cases}
\end{equation}
where  $j$ is a prescribed net current of agents entering the domain.

Our interest is to understand the behavior of the solutions of \eqref{eq_ODE_stationary} as the current $j$ becomes large; that is, a large flow of agents. By solving numerically the ODE \eqref{eq_ODE_stationary},
 for $\varepsilon = 1$ and $j$ between $0$ and $1.5$, with increments of $0.1$, we observe the different solutions for~$\rho$. For $j > 1.2$, the density $\rho$ is larger than one and thus the model breaks down, see Figure \ref{fig:congestion}.

\begin{figure}
\centering
\begin{subfigure}[b]{\sizefigure\textwidth}
\includegraphics[width=\textwidth]{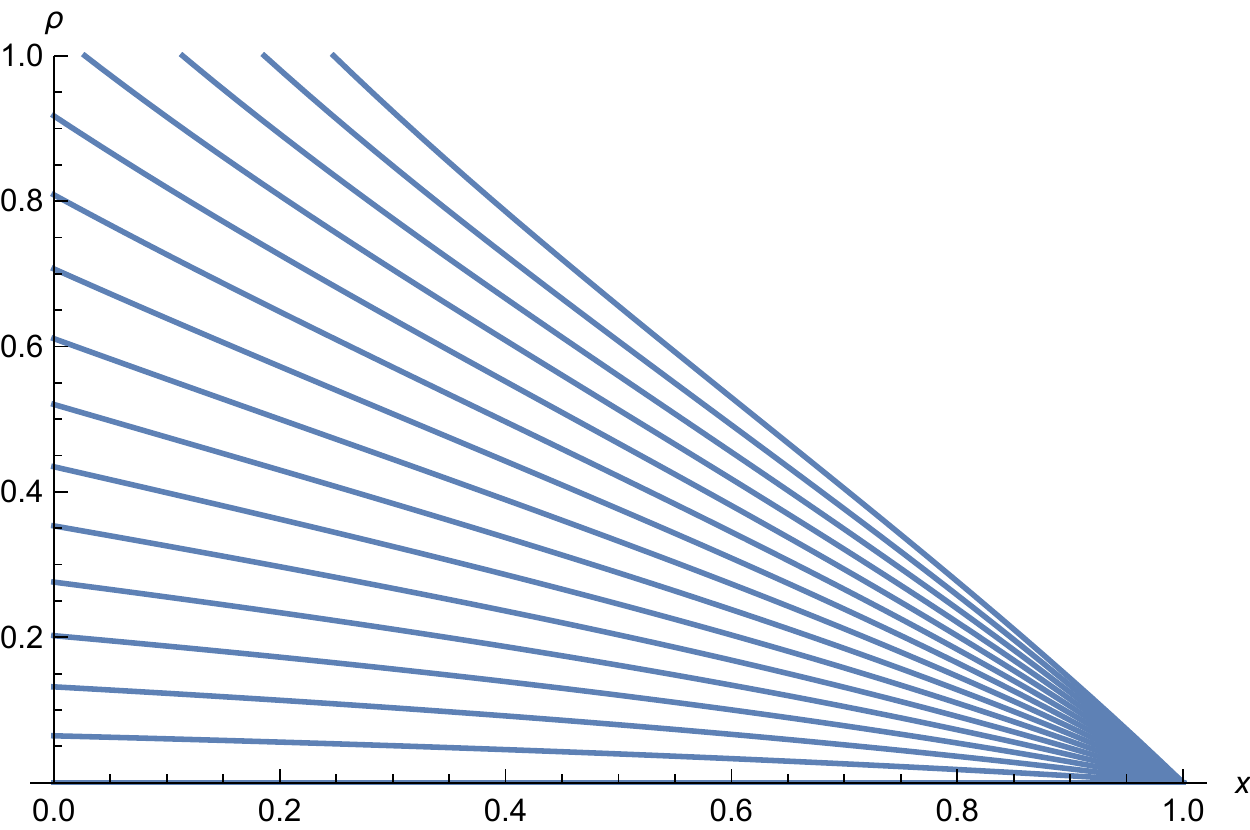}
\caption{{\color{blue}$\varepsilon = 1$} and $j$ from 0 to 1.5,}
\label{fig:congestion}
\end{subfigure}
~ \begin{subfigure}[b]{\sizefigure\textwidth}
\includegraphics[width=\textwidth]{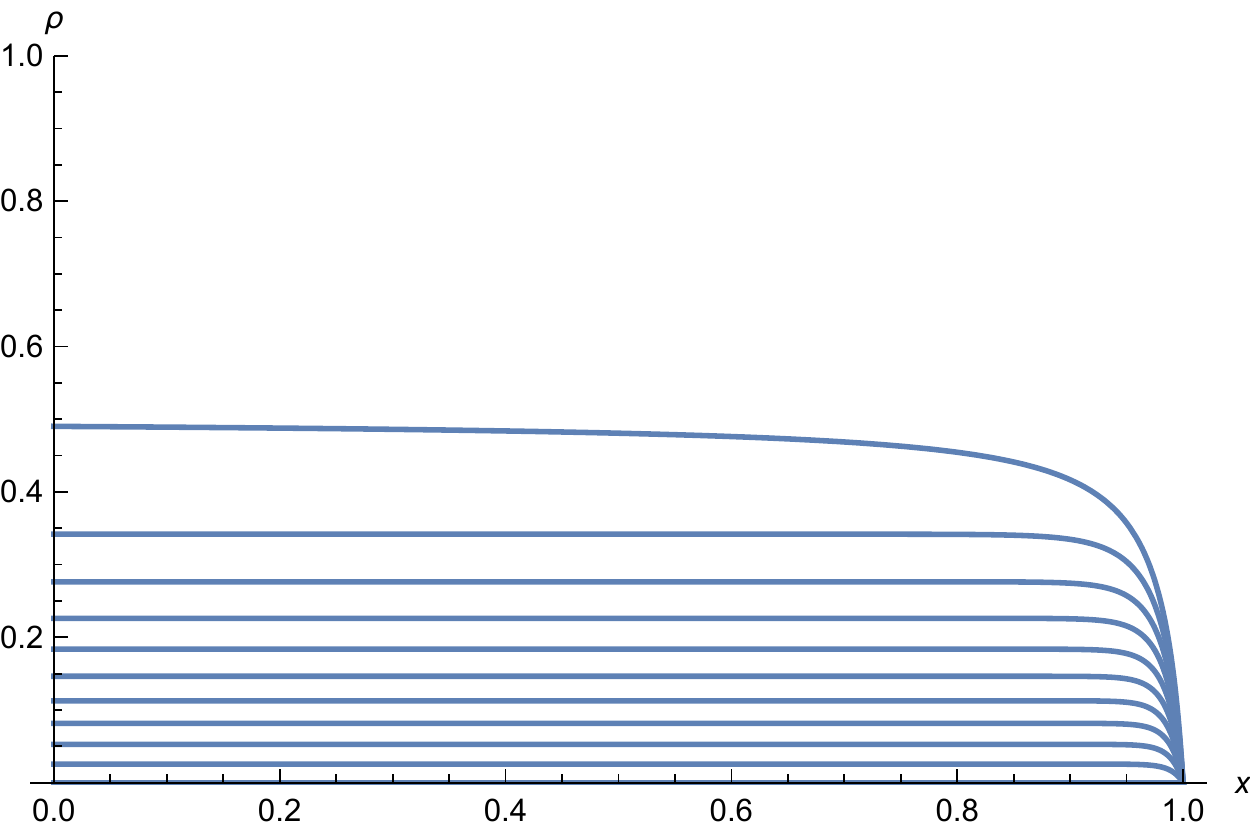}
\caption{{\color{blue}$\varepsilon = 0.01$} and $j$ from $0$ to $0.25$.}
\label{fig:congestion_low_viscosity}
\end{subfigure}
\caption{{\color{blue}Onset of congestion.}}
\label{figure_onset_congestion}
\end{figure}

This is remarkable because solutions of the reduced equation  \eqref{eq_ODE_stationary}
are $C^\infty$, however, in the original model the equations become singular. 

Now, in {\color{red}Figure \ref{fig:congestion_low_viscosity}}, we depict the effect of a small viscosity ($\varepsilon = 0.01$) for a range of admissible currents (the ones avoiding $\rho > 1$). We call the current for which the model stops to work of critical current.

\subsection*{Dependence on viscosity}
Here, we investigate the dependence of the viscosity on the solutions.
In {\color{red}Figure \ref{fig:supercritical_case}}, we see that, for small viscosity, the model breaks down. However, large viscosity seems to have a
stabilizing effect. 
In {\color{red}Figure \ref{fig:supercritical_case}}, we used a current with a fixed value {$j=0.5$} and viscosities from $0.3$ to $1.5$ with increments of
$0.1$.

\begin{figure}
\centering
\begin{subfigure}[b]{\sizefigure\textwidth}
\includegraphics[width=\textwidth]{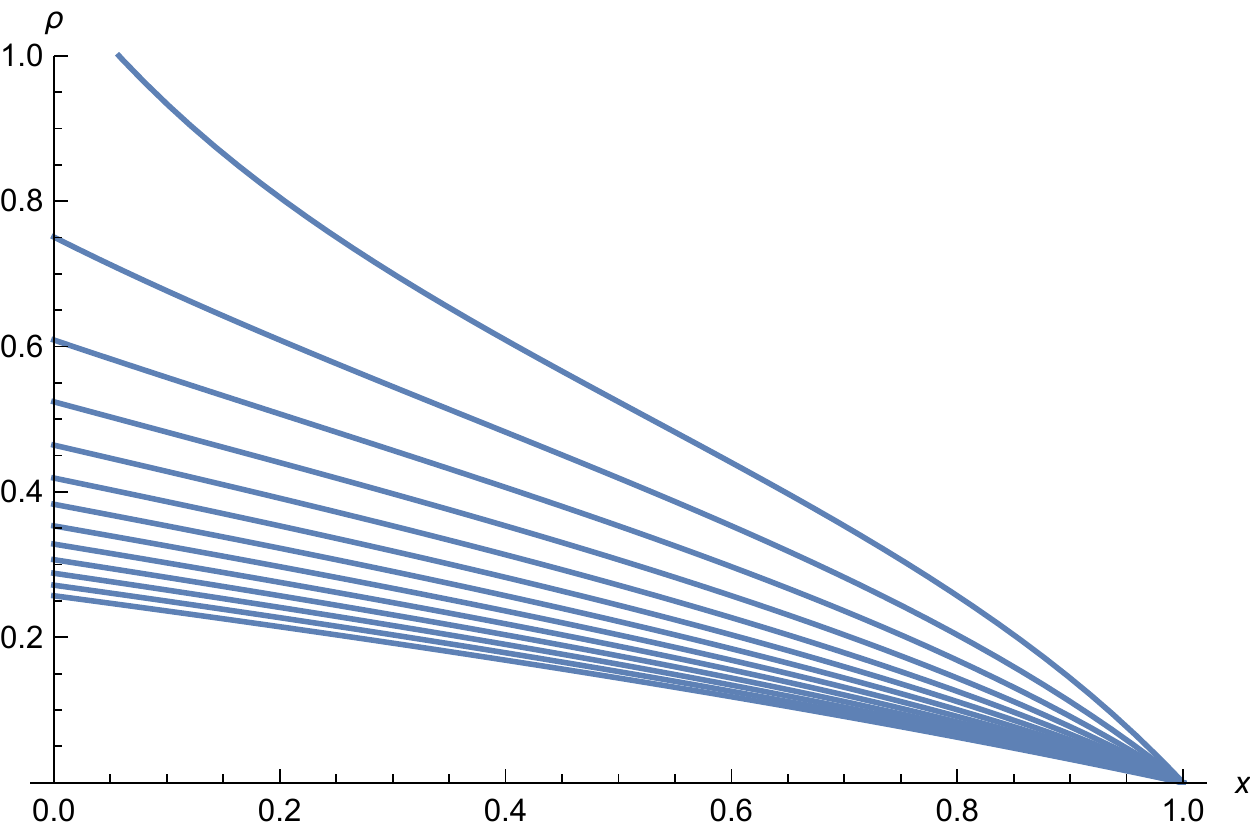}
\caption{\color{blue}Supercritical case,}
\label{fig:supercritical_case}
\end{subfigure}
~ 
\begin{subfigure}[b]{\sizefigure\textwidth}
\includegraphics[width=\textwidth]{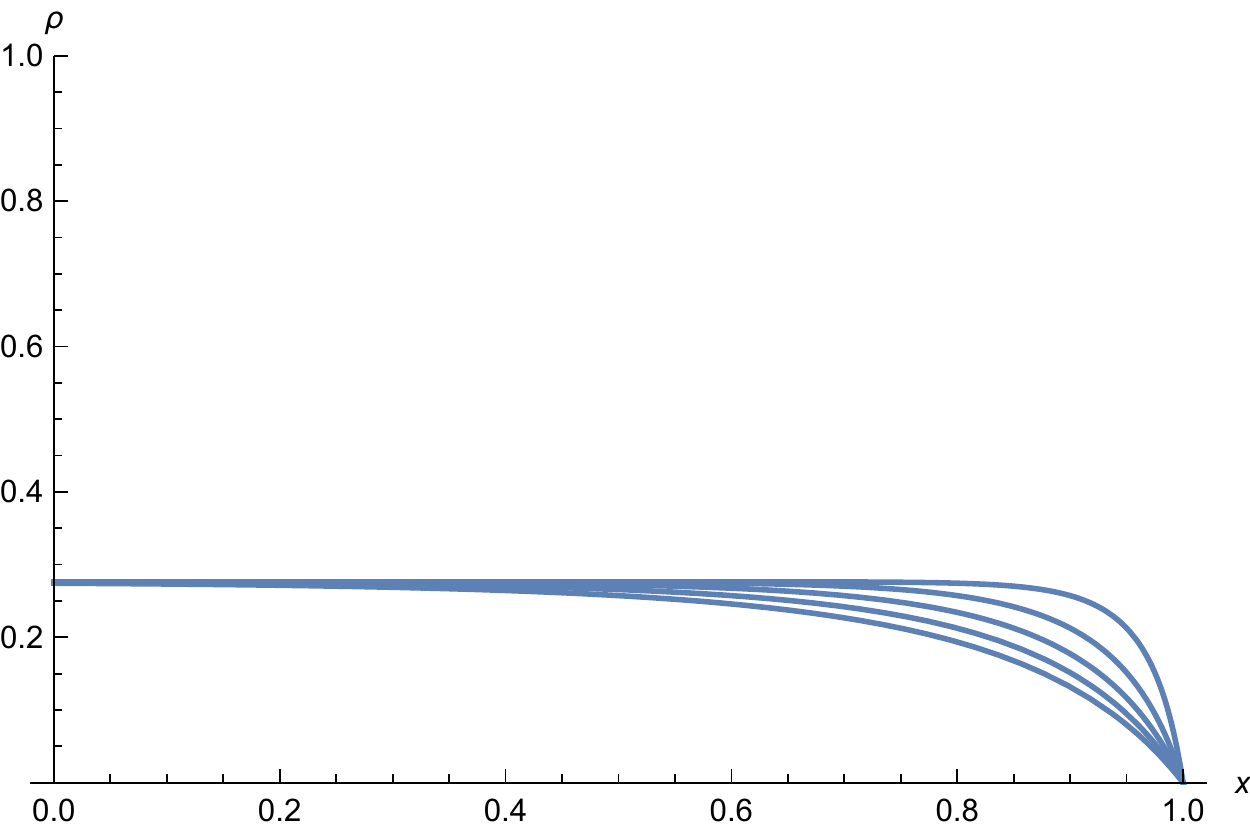}
\caption{\color{blue}Subcritical case.}
\label{fig:subcritical_case}
\end{subfigure}
\caption{{\color{blue}Dependence of congestion on viscosity.}}
\label{figure_dependence_congestion_on_viscosity}
\end{figure}   

For  {\color{blue} $j=0.2$}, the different solutions for the density $\rho$ show an upper bound when using different viscosities (from $0$ to $0.1$ with increments of $0.02$), see {\color{red}Figure~\ref{fig:subcritical_case}}.

To better understand the relation between viscosity and the critical current,
we solve the ODE \eqref{eq_ODE_stationary} with different viscosities and compute the critical current for which the density reaches one. 
At this density, the model breaks down as shown in {\color{red}Figure }\ref{figure_current_wrt_viscosity}.

\begin{figure}
\centering      
\includegraphics[width=0.3\textwidth]{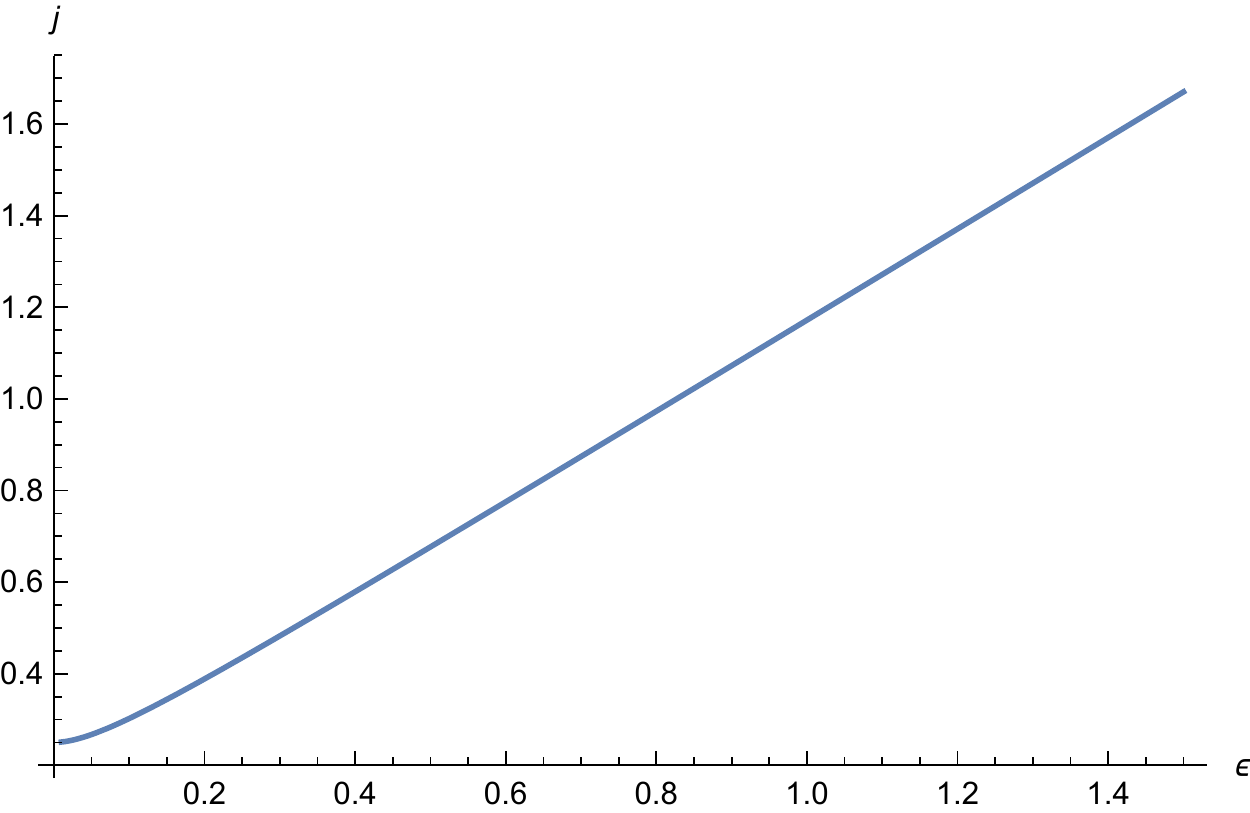}\\
\caption{{\color{blue}Critical current $j$ as a function of viscosity.}} 
\label{figure_current_wrt_viscosity}
\end{figure}

\section{Trend to equilibrium}
We investigate the relation between the solution to the stationary problem \eqref{eq_ODE_stationary} and  the time-dependent one \eqref{eq_pde_rho_1-dimensional}. The numerical solution to the stationary problem is calculated using an ODE solver. Now, the solution to the time-dependent case is computed using  the numerical approach we describe in Section \ref{section_numerical_methods}. As an example, we solve the problem with the following initial/boundary conditions, and $\varepsilon = 0.05$:
\begin{equation*}
\begin{cases}
\rho(0,t) = - 0.2 (1-e^{-10t}),\\
\rho(1,t) = 0,\\
\rho(x,0) = x^2 (1-x)^2.
\end{cases}
\end{equation*}
For large times, the stationary solution is an upper bound to the time-dependent one (the transient time where this behavior fails is due to the initial condition). 
This behavior is depicted in {\color{red}Figure~\ref{fig:dominance}}. The time-dependent solution is  the filled graph while the plot of the stationary one is depicted with wire mesh.

                \begin{figure}
                \centering
                \begin{subfigure}[b]{\sizefigure\textwidth}
                \includegraphics[width=\textwidth]{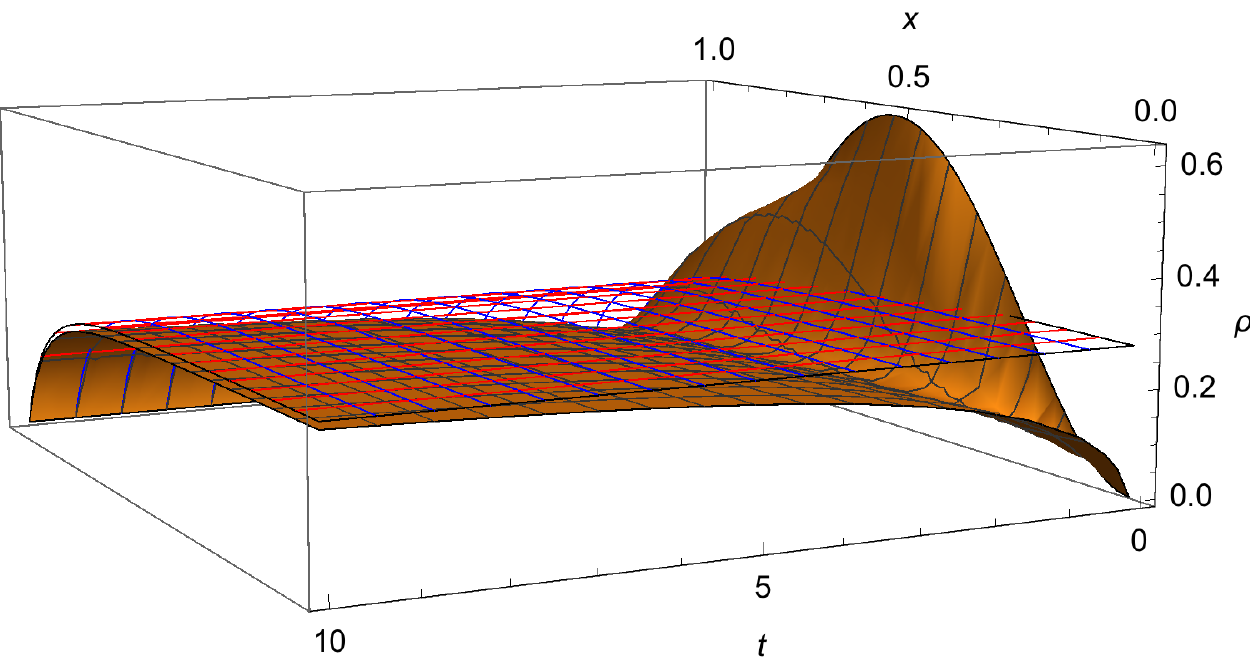}
                \caption{\color{blue}Dominance,}
                \label{fig:dominance}
                \end{subfigure}
                ~                                 \begin{subfigure}[b]{\sizefigure\textwidth}
                \includegraphics[width=\textwidth]{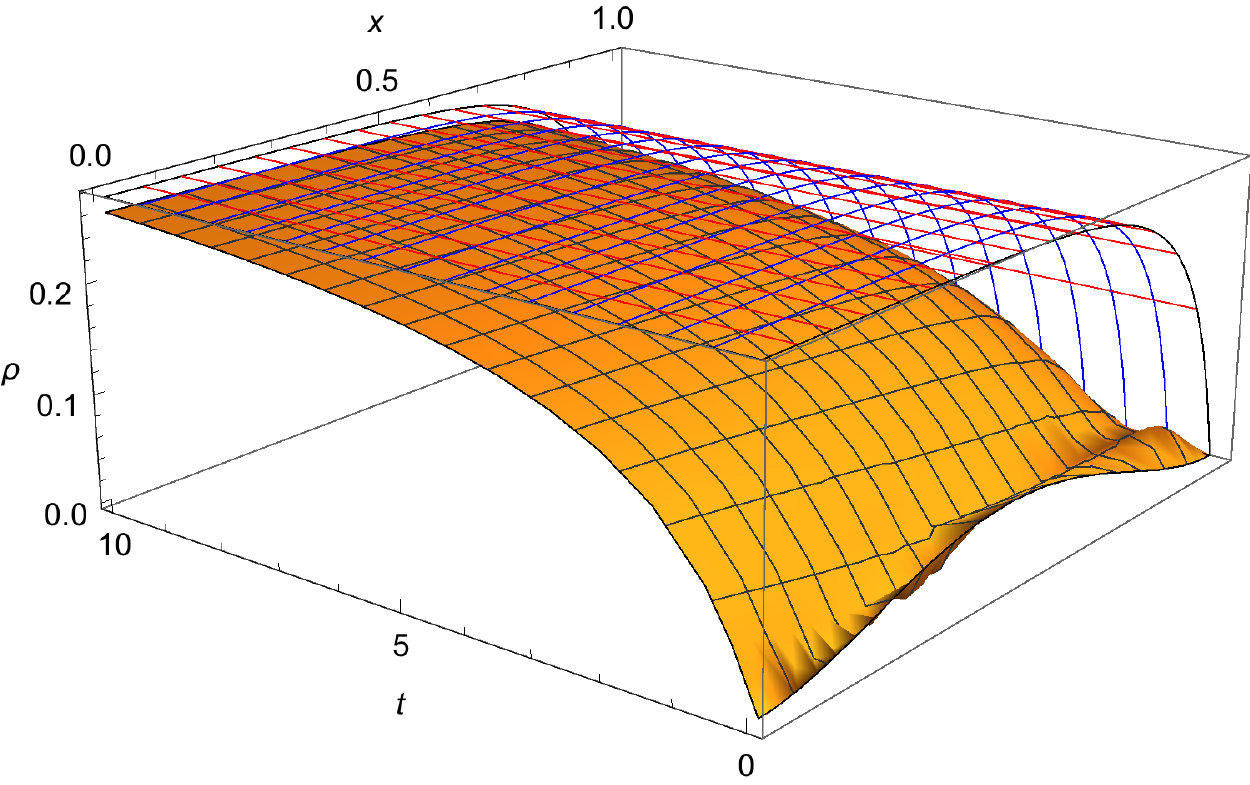}
                \caption{\color{blue}Trend to the equilibrium.}
                \label{fig:trend_equilibrium}
                \end{subfigure}
                \caption{{\color{blue}Stationary and time-dependent solutions.}}
                \label{}
                \end{figure}

The second aspect we observe is the trend to equilibrium of the time-dependent solution  in the subcritical case, {\color{red}Figure \ref{fig:trend_equilibrium}}. We are not aware of any proof or theoretical result on the asymptotic behavior of this problem. 

\section{Numerical approach}\label{section_numerical_methods}

Here, we describe a numerical approach to the Hughes model. Because the  Fokker-Planck equation is the adjoint of the linearization of a new Hamilton-Jacobi equation, we can use known methods for Hamilton-Jacobi equations to construct automatically schemes for the Fokker-Planck equation as we illustrate here.

To solve the Hughes model, we must use numerical methods that discretize $Du$\ in a consistent way for both equations. Our  approach is the following:  because the Fokker-Planck equation in \eqref{eq_hughes_fokker-planck} is the adjoint of the linearization of
the nonlinear Hamilton-Jacobi operator
\begin{equation}
\label{HJ2}
-u_t+(1-\rho)^2 \ \frac{|Du|^2}{2}-\varepsilon\Delta u,
\end{equation} 
we can treat both equations, this new Hamilton-Jacobi equation and the original Eikonal equation, via the same numerical method for Hamilton-Jacobi equations, for instance, using a monotone scheme.

In the examples discussed in the next section, we use a semi-discretization in space and treat the time variable as continuous (using a backward difference formula stiff solver for the time evolution). 

Let $h$ be the mesh size, $x_n$ the grid points and $u_n$, $\rho_n$ be the corresponding node values of $u$ and $\rho$. 
The non-linear operator in the Eikonal  equation in \eqref{eq_hughes_fokker-planck} at a point $x_n$
is discretized through the monotone scheme
\begin{align*}
N_n(u)\equiv & \frac{\max\{u_{n}-u_{n-1},0\}^2}{2h^2}+
\frac{\max\{u_{n}-u_{n+1},0\}^2}{2h^2}
-\frac{1}{(1-\rho_n)^2}.
\end{align*}
The operator \eqref{HJ2} is discretized as 
\begin{align*}
\tilde N_n(u)\equiv  (1-\rho_n)^2\left[\frac{\max\{u_{n}-u_{n-1},0\}^2}{2h^2}\right.
\left.+
\frac{\max\{u_{n}-u_{n+1},0\}^2}{2h^2}\right]
-\varepsilon \frac{u_{n+1}-2 u_n+u_{n-1}}{h^2}.
\end{align*}
The numerical scheme is given as
\begin{equation*}
\begin{cases}
N(u)=0,\\
\rho_t+(D_u \tilde N (u))^T \rho =0.
\end{cases}
\end{equation*}

Thanks to this adjoint structure, the second equation is discretized automatically using symbolic calculus. This approach is valid in arbitrary dimension and, by construction, has properties such as conservation of mass and positivity. 
\section{Numerical examples}\label{section_numerical_examples}
Now, we use the numerical approach from the previous section and present two examples. The first one deals with agents/people/pedestrians with the possibility of evacuating an one-dimensional domain from both sides. In the second example, we impose a current of agents entering the domain and we include reflecting boundary conditions on one of the sides of the one-dimensional domain.

\subsection{Example 1}
For the first numerical example we solve the Hughes model with a low viscosity $\varepsilon=0.01$, and the following initial/boundary conditions:\\
\begin{equation*}
\begin{cases}
\rho(0,t) = 0,\\
\rho(1,t) = 0,\\
\rho(x,0) = 0.9 \sin^2 (3 \pi x),\\
u(0,t)=0,\\
u(1,t)=0.
\end{cases}
\end{equation*}
These boundary conditions correspond to the exit problem. Agents have an initial distribution $\rho(x,0)$ and
seek to leave the interval $[0,1]$ by either $x=0$ or $x=1$. 
We plot the density~$\rho$ and the solution $u$ of the Eikonal, corresponding to the exit time, in {\color{red}Figure} \ref{numerical_example_1}.

                \begin{figure}
                \centering
                \begin{subfigure}[b]{\sizefigure\textwidth}
                \includegraphics[width=\textwidth]{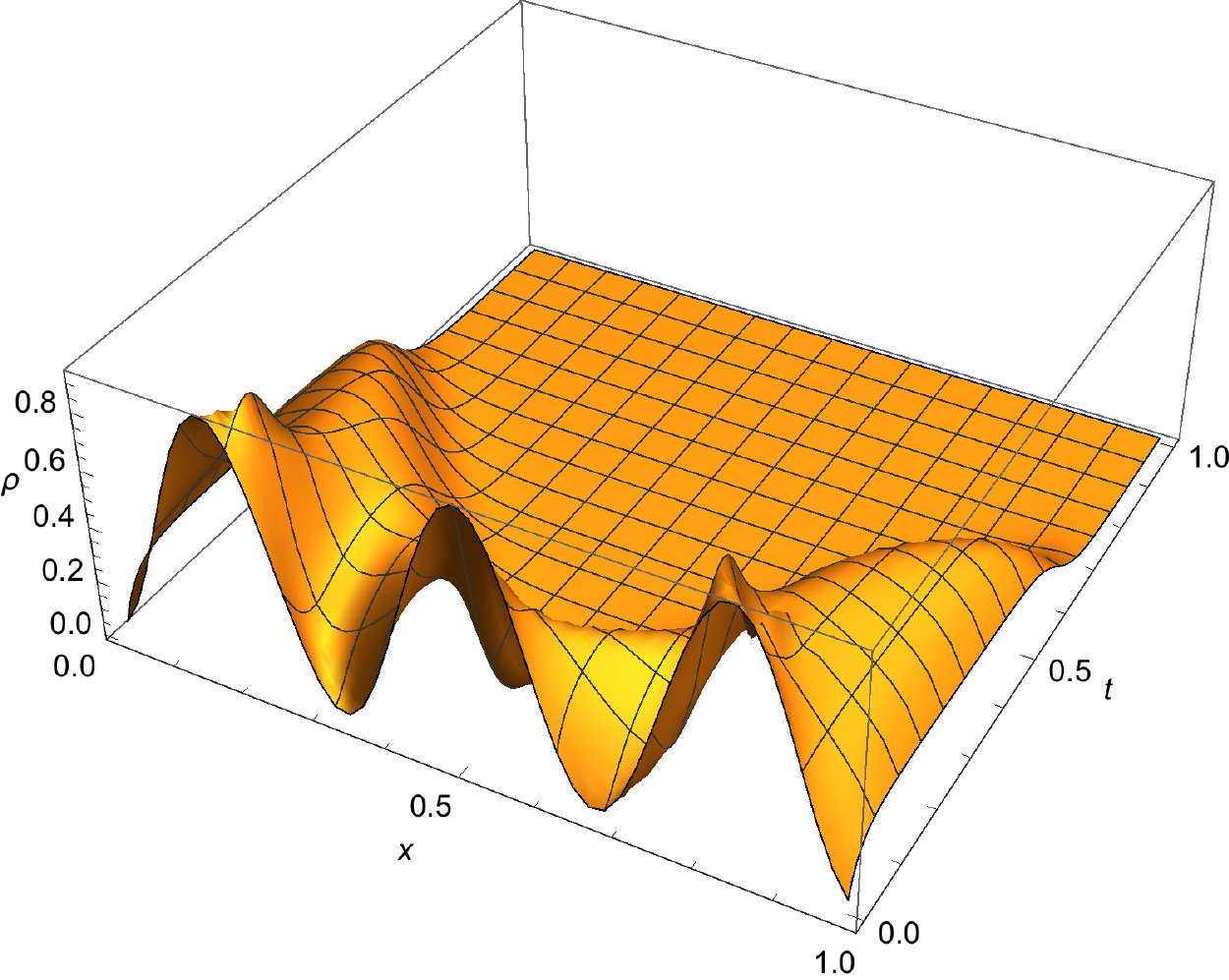}
                \caption{\color{blue}Density $\rho$,}
                \label{fig:numerical_example_1_density_rho}
                \end{subfigure}
                ~                                 \begin{subfigure}[b]{\sizefigure\textwidth}
                \includegraphics[width=\textwidth]{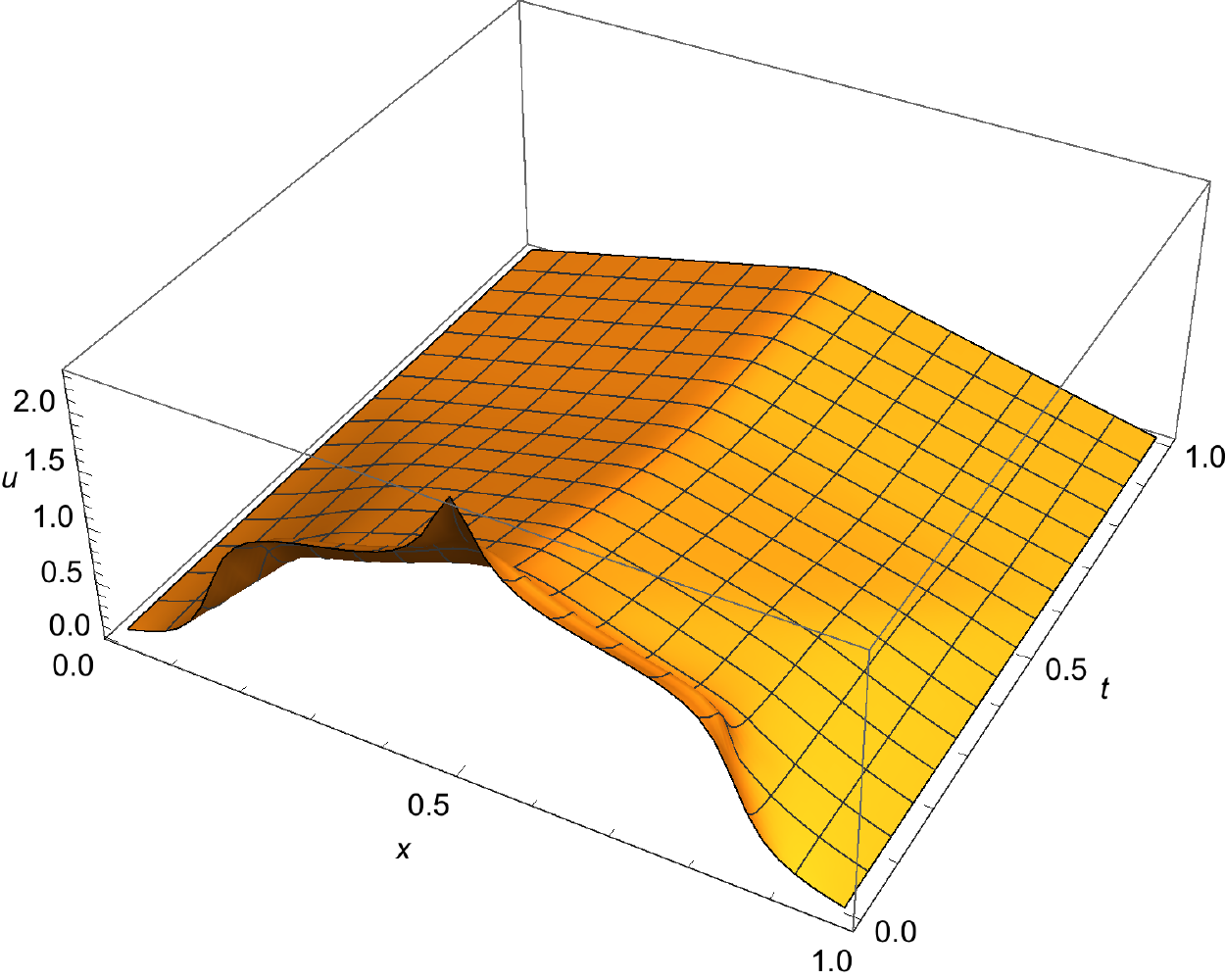}
                \caption{\color{blue}Eikonal solution $u$.}
                \label{fig:numerical_example_1_eikonal_solution}
                \end{subfigure}
                \caption{{\color{blue}Numerical example 1.}}
                \label{numerical_example_1}
                \end{figure}
                \subsection{Example 2}
        
        The second example is the  flow problem for
        \[
        \begin{cases}
        \rho(1,t)=0,\\
        \rho(x,0)=0.4 \sin^2 (3 \pi x),\\
        u(x,1)=0.
 
        \end{cases}
        \]
        In addition, at $x=0$, we impose for $\rho$ a flow-one condition:
        \[
        \rho(1-\rho)^2 Du +\varepsilon \rho_x=1,
        \]
        and, at $x~=~0,$ we impose reflecting boundary condition for $u$. Here,  we use a higher viscosity, $\varepsilon=0.1 $ and  present our results in {\color{red}Figure} \ref{numerical_example_2}.

                \begin{figure}
                \centering
                \begin{subfigure}[b]{\sizefigure\textwidth}
                \includegraphics[width=\textwidth]{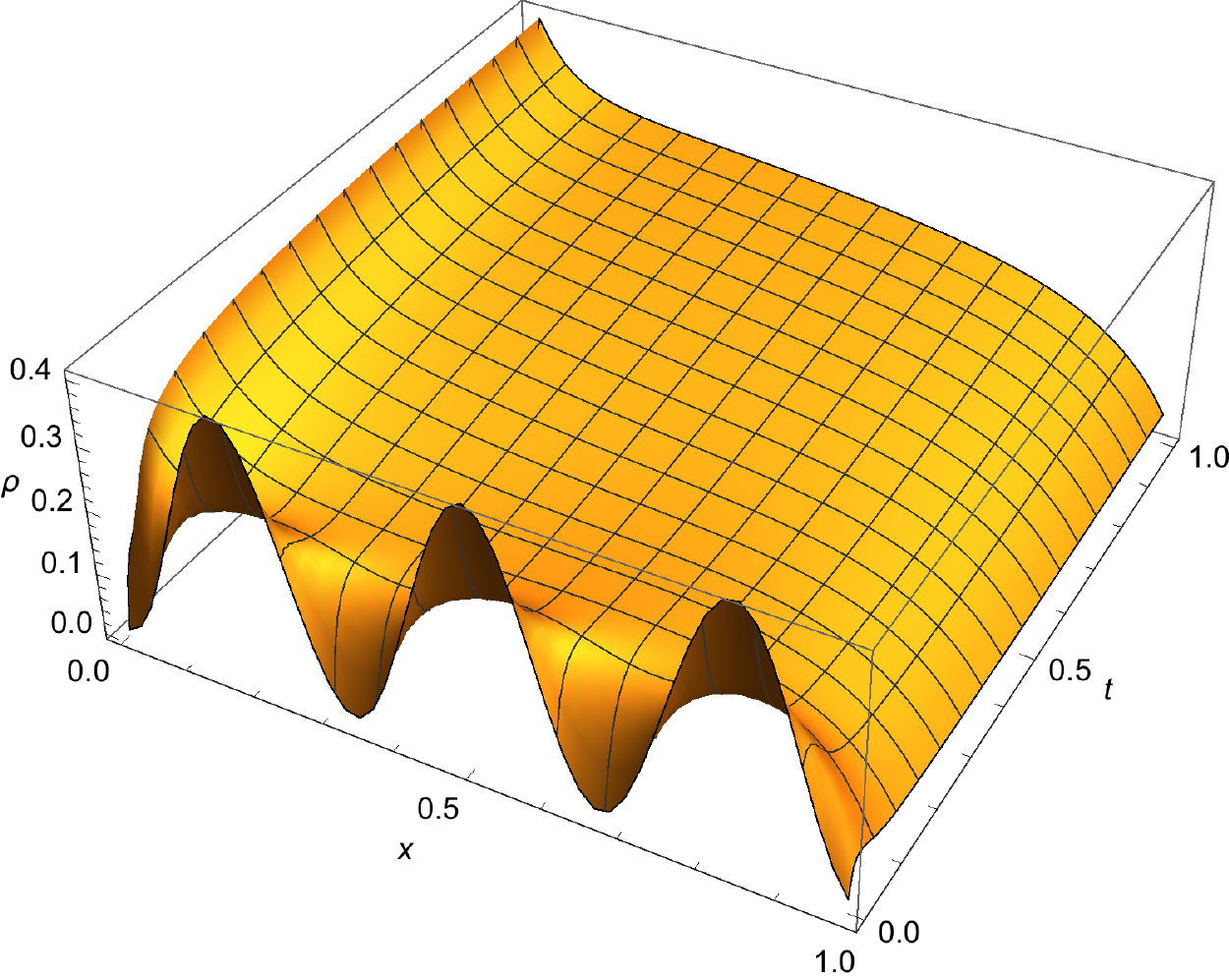}
                \caption{\color{blue}Density $\rho$}
                \label{fig:}
                \end{subfigure}
                ~                                 \begin{subfigure}[b]{\sizefigure\textwidth}
                \includegraphics[width=\textwidth]{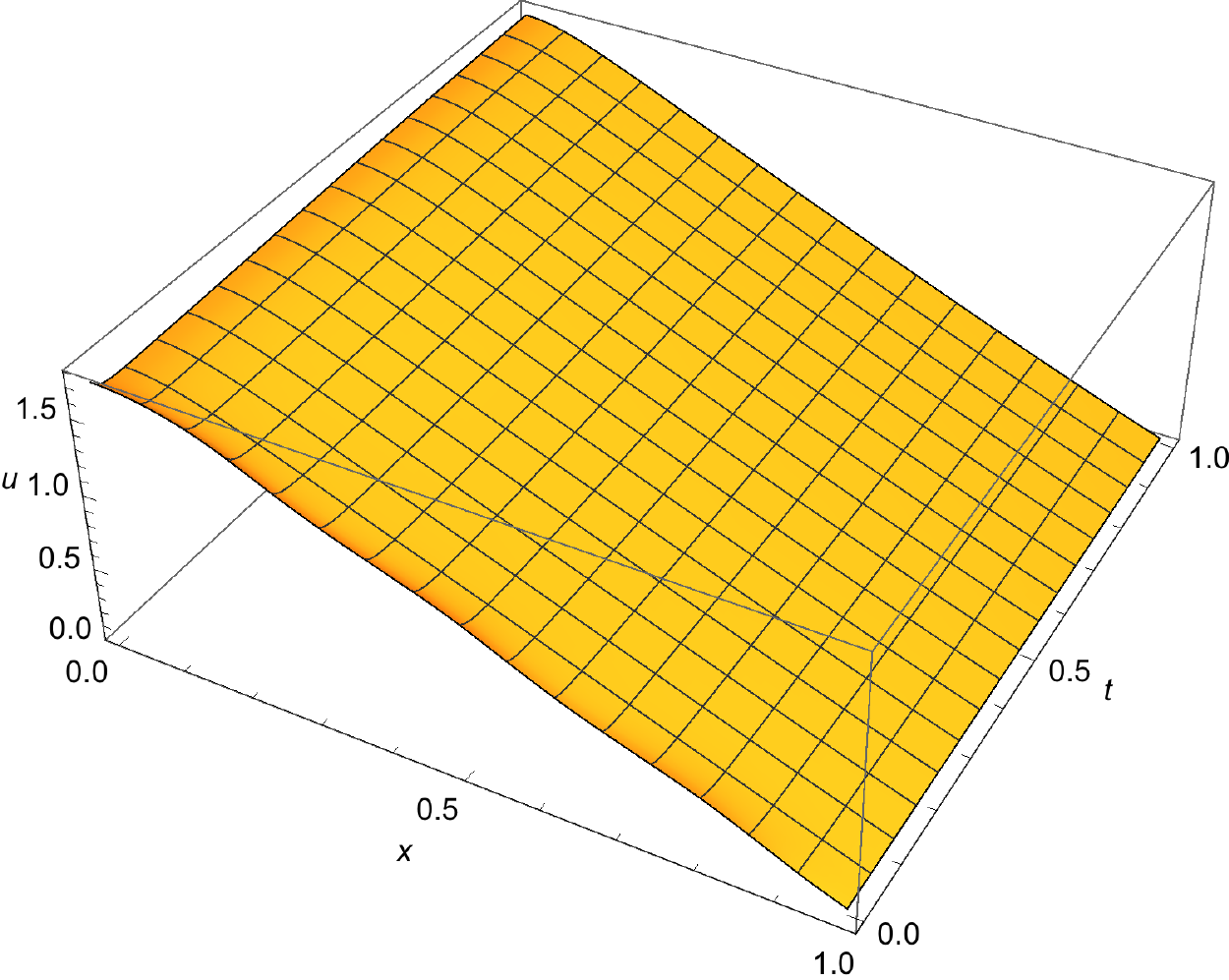}
                \caption{\color{blue}Eikonal solution $u$.}
                \label{fig:}
                \end{subfigure}
                \caption{{\color{blue} Numerical example 2.}}
                \label{numerical_example_2}
                \end{figure}

\section{Conclusions}
Here, we develop new a priori estimates for the Hughes model, which are an important step in understanding the wellposedness of the system for $\varepsilon>0$.

Next, we use radial solutions to prove the existence of shocks in dimension greater than one.
Consequently, the Hughes model without viscosity may fail to have smooth solutions.

Then, we uncover a new mechanism for the breakdown of classical solutions in the fixed current problem. Here, the critical density is reached without loss of regularity in~$\rho$.
Moreover, we examine the dependence of the critical current on the viscosity, and we present numerical evidence for the existence of a long-term limit. 

Finally, we describe a new method for the approximation of the Hughes model. Our method combines, in a novel way, monotone schemes for Hamilton-Jacobi equations with the adjoint structure of the Fokker-Planck equation, and applies to a wide range of related problems. 

\bibliographystyle{plain}
\bibliography{mfg}

\begin{thebibliography}{10}

\bibitem{MR2921876}
Debora Amadori and M.~Di~Francesco.
\newblock The one-dimensional {H}ughes model for pedestrian flow:
  {R}iemann-type solutions.
\newblock {\em Acta Math. Sci. Ser. B Engl. Ed.}, 32(1):259--280, 2012.

\bibitem{MR3229831}
Debora Amadori, Paola Goatin, and Massimiliano~D. Rosini.
\newblock Existence results for {H}ughes' model for pedestrian flows.
\newblock {\em J. Math. Anal. Appl.}, 420(1):387--406, 2014.

\bibitem{markw2}
M.~Burger, Marco Di~Francesco, Peter~A. Markowich, and Marie-Therese Wolfram.
\newblock On a mean field game optimal control approach modeling fast exit
  scenarios in human crowds.
\newblock {\em preprint}, 2013.

\bibitem{2016arXiv160107324C}
E.~{Carlini}, A.~{Festa}, F.~J. {Silva}, and M.-T. {Wolfram}.
\newblock {A Semi-Lagrangian scheme for a modified version of the Hughes model
  for pedestrian flow}.
\newblock {\em ArXiv e-prints}, January 2016.

\bibitem{MR3460619}
Jose~A. Carrillo, Stephan Martin, and Marie-Therese Wolfram.
\newblock An improved version of the {H}ughes model for pedestrian flow.
\newblock {\em Math. Models Methods Appl. Sci.}, 26(4):671--697, 2016.

\bibitem{2016arXiv160206153D}
M.~{Di Francesco}, S.~{Fagioli}, M.~D. {Rosini}, and G.~{Russo}.
\newblock {Deterministic particle approximation of the Hughes model in one
  space dimension}.
\newblock {\em ArXiv e-prints}, February 2016.

\bibitem{markw}
Marco Di~Francesco, Peter~A. Markowich, Jan-Frederik Pietschmann, and
  Marie-Therese Wolfram.
\newblock On the {H}ughes' model for pedestrian flow: the one-dimensional case.
\newblock {\em J. Differential Equations}, 250(3):1334--1362, 2011.

\bibitem{MR3041567}
Nader El-Khatib, Paola Goatin, and Massimiliano~D. Rosini.
\newblock On entropy weak solutions of {H}ughes' model for pedestrian motion.
\newblock {\em Z. Angew. Math. Phys.}, 64(2):223--251, 2013.

\bibitem{MR3055243}
Paola Goatin and Matthias Mimault.
\newblock The wave-front tracking algorithm for {H}ughes' model of pedestrian
  motion.
\newblock {\em SIAM J. Sci. Comput.}, 35(3):B606--B622, 2013.

\bibitem{MR3195844}
Diogo~A. Gomes and Jo{\~a}o Sa{\'u}de.
\newblock Mean field games models---a brief survey.
\newblock {\em Dyn. Games Appl.}, 4(2):110--154, 2014.

\bibitem{hug02}
Roger~L. Hughes.
\newblock A continuum theory for the flow of pedestrians.
\newblock {\em Transportation Research Part B}, pages 507--535, 2002.

\bibitem{MR2928383}
Filippo Santambrogio.
\newblock A modest proposal for {MFG} with density constraints.
\newblock {\em Netw. Heterog. Media}, 7(2):337--347, 2012.

\end{thebibliography}

\end{document}